\tikzset{
    arrowMe/.style={
        postaction=decorate,
        decoration={
            markings,
            mark=at position .5 with {\arrow[thick]{#1}}
        }
    }
}
\tikzset{point/.style = {fill=black,circle,inner sep=0.7pt}}
\tikzstyle{vertex}=[circle,fill=black!25,minimum size=20pt,inner sep=0pt]
\tikzstyle{edge} = [draw]
\tikzstyle{weight} = [font=\tiny]
\newcommand{\Aff}{\ensuremath{\text{Aff}}}
\title{The Topology of $3$-Dimensional Hessian Manifolds}
\author{Emmanuel Gnandi}
\address{INSA de Toulouse, Département de Génie Mathématique, IMT, 135 Avenue de Rangueil, 31077 Toulouse Cedex 4, France}
\email{kpanteemmanuel@gmail.com, gnandi@insa-toulouse.fr}
\newtheorem{thm}{Theorem}[section]
\newtheorem{cor}{Corollary}[section]
\newtheorem{prop}{Proposition}[section]
\newtheorem{ex}[thm]{Exemple}
\newtheorem{rem}[thm]{Remark}
\newtheorem{defn}[thm]{Definition}
\newtheorem{com}[thm]{Comments}
\numberwithin{equation}{section}
\begin{document}
\maketitle
\begin{abstract}
We investigate the global topology of $3$-dimensional Hessian manifolds. 
We prove that any compact, orientable $3$-dimensional Hessian manifold is either a 
Hantzsche–\\Wendt manifold or admits the structure of a Kähler mapping torus. 
We analyze the parity of Betti numbers for compact, orientable $3$-dimensional Hessian manifolds, 
with special focus on those of Koszul type (hyperbolic manifolds). 
Moreover, we show that the product of two compact, orientable, $3$-dimensional Hessian manifolds 
of Koszul type naturally carries a Kähler structure. 
Finally, we establish that every compact, orientable, $3$-dimensional Hessian manifold 
is a Seifert manifold with trivial Euler number, 
whose underlying orbifold has either vanishing or negative Euler characteristic, 
thus providing a complete topological classification.
\end{abstract}

\section{Introduction}
Hessian manifolds play a fundamental role in affine differential geometry and information geometry. 
They are defined by a locally flat, torsion-free connection $\nabla$ and a Riemannian metric $g$ such that, locally, there exists a smooth function $\phi$ with
\[
g = \nabla d\phi.
\]
 
In his seminal works~\cite{amari2012differential, amari2016information}, Amari showed that exponential families are always Hessian manifolds, 
since the Fisher information metric can be expressed as the Hessian of the logarithmic partition function. Hessian geometry is deeply connected to Kähler geometry. 
Shima established in~\cite{shima1981hessian, shima1986vanishing, shima1997geometry} that the tangent bundle of a Hessian manifold naturally carries a Kähler structure. 
Moreover, a Hessian structure is formally analogous to a Kähler structure, 
since a Kähler metric is locally given as a complex Hessian.  
Armstrong and Amari~\cite{amari2014curvature} further proved that the Pontryagin forms of any Hessian metric necessarily vanish. 

It is well known that compact manifolds with finite fundamental groups cannot admit Hessian structures~\cite{ay2002dually}. 
Thus, the existence of a Hessian structure imposes strong topological obstructions on the underlying manifold.  
In particular, Shima~\cite{shima1981hessian} proved that the universal affine covering of a compact Hessian manifold is a convex domain, 
and deduced the nonexistence of Hessian metrics on Hopf manifolds 
$(\mathbb{S}^{n-1} \times \mathbb{S}^{1})$ for $n>1$. 

In this paper, we investigate the global structure of compact, orientable $3$-dimensional Hessian manifolds. 
We establish a correspondence, in dimension three, between compact, orientable Hessian manifolds and Kähler mapping torus, 
thereby highlighting the deep relationship between Hessian and Kähler geometries. 
We also study topological constraints, including parity properties of Betti numbers. 
Finally, we obtain a complete classification of compact, orientable $3$-dimensional Hessian manifolds.

\medskip
The paper is organized as follows.  
Section~2 recalls the main definitions and preliminary notions used throughout the work.  
In Section~3, we study $3$-dimensional compact orientable Hessian manifolds through their relation with Kähler geometry, 
and derive several topological consequences, including parity results for Betti numbers.  
Finally, Section~4 is devoted to the classification of compact, orientable $3$-dimensional Hessian manifolds.

\section{Basic Notions}
\label{sec:locally_flat_kv_cohomology}
All manifolds in this paper are assumed to be connected and without boundary, unless otherwise stated.

\subsection{Affine manifolds, Hessian metrics, and the first Koszul form}

\subsubsection{Affine manifolds}
\begin{defn}
A \emph{locally flat manifold} is a pair $(M, \nabla)$, where $\nabla$ is a connection whose torsion and curvature tensors vanish identically:
\[
T^\nabla = 0, \qquad R^\nabla = 0.
\]
Such a connection $\nabla$ is called a \emph{locally flat connection}, and $(M, \nabla)$ is referred to as a locally flat manifold.
\end{defn}

\begin{defn}
\label{def:affine_structure}
An \emph{affinely flat structure} on an $m$-dimensional manifold $M$ is a complete atlas
\[
\mathcal{A} = \{ (U_i, \Phi_i) \},
\]
such that the transition maps $\Phi_i^{-1} \circ \Phi_j$ belong to the group $\Aff(\mathbb{R}^m)$ of affine transformations on $\mathbb{R}^m$.
\end{defn}

\begin{rem}
In the language of $G$-structures, Definition~\ref{def:affine_structure} is equivalent to the statement that the structure group of $M$ reduces to $\Aff(\mathbb{R}^m)$. Several long-standing conjectures pertain to affine structures, including the Chern conjecture, which asserts that any compact affine manifold has vanishing Euler characteristic.
\end{rem}

A manifold admits an affine structure if and only if it is locally flat. This can be seen by considering local charts and the associated coordinate vector fields. In general, it is not possible to define a global connection from these vector fields, but in the presence of an affine structure, this becomes possible because the derivatives of the transition maps are constant. The resulting connection then has vanishing torsion and curvature. Conversely, if a connection on $M$ has vanishing torsion and curvature, the structure group reduces to $\Aff(\mathbb{R}^m)$, thereby inducing an affine structure.

\begin{ex}
The torus $\mathbb{T}^{n} = \mathbb{R}^{n} / \mathbb{Z}^{n}$ has a natural affine structure 
for which the projection map 
\(\pi : \mathbb{R}^n \to \mathbb{T}^{n}\) is an affine local diffeomorphism.
\end{ex}

\begin{ex}[\emph{Hopf manifold}]
Let $\lambda > 1$ and consider the action of $\mathbb{Z}$ on $\mathbb{R}^n \setminus \{0\}$ given by
\[
k \cdot x := \lambda^k x.
\]
Since this action is free and proper, the quotient is a smooth manifold called the \emph{Hopf manifold}.
\end{ex}

It is also interesting to provide examples of manifolds that do not admit an affine structure.

\begin{ex}[\cite{milnor19753,carriere1993inexistence,dal1991champs}]
Brieskorn manifolds $M(p,q,r)$ such that
\[
\frac{1}{p}+\frac{1}{q}+\frac{1}{r} < 1
\quad \text{or} \quad
pqr \text{ is even}.
\]
\end{ex}

\subsubsection{Hessian metrics}
Hessian manifolds possess particularly rich geometric and topological structures and are closely related to Kähler geometry.  
In particular, the tangent bundle of any Hessian manifold carries a natural Kähler metric induced by the Hessian structure
\cite{shima2007geometry,dombrowski1962geometry,satoh2019almost}.  
Hessian geometry plays a central role in information geometry, special Kähler geometry, and complex analysis, 
with important applications in convex optimization, statistics, combinatorics, Souriau’s thermodynamics on Lie groups, and quantitative finance.

\begin{defn}[\cite{shima1997geometry, shima2007geometry}]
A Riemannian metric $g$ on a locally flat manifold $(M, \nabla)$ is called a \emph{Hessian metric} if, locally, there exists a smooth function $\varphi$ such that
\[
g = \nabla^2 \varphi, \quad \text{i.e.,} \quad g_{ij} = \frac{\partial^2 \varphi}{\partial x_i \partial x_j},
\]
where $(x_1, \dots, x_n)$ is a system of affine coordinates with respect to $\nabla$.  

A locally flat manifold $(M, \nabla)$ equipped with a Hessian metric $g$ is called a \emph{Hessian manifold} and is denoted by $(M, g, \nabla)$.
\end{defn}

For examples of Hessian manifolds, see \cite{shima2007geometry} (p.~17) and \cite{calin2014geometric, ay2017information}.

Let $D$ denote the Levi-Civita connection of $(M,g)$, and define
\[
\gamma = D - \nabla.
\]
Since both $\nabla$ and $D$ are torsion-free, we have
\[
\gamma_X Y = \gamma_Y X, \quad \text{for all vector fields } X,Y.
\]
Moreover, in affine coordinates, the components $\gamma^{i}_{jk}$ of $s$ coincide with the Christoffel symbols $\Gamma^{i}_{jk}$ of $D$.

\begin{prop}[\cite{shima1997geometry}]
Let $(M, \nabla)$ be a locally flat manifold and $g$ a Riemannian metric. The following are equivalent:
\begin{enumerate}
    \item $g$ is a Hessian metric;
    \item $(\nabla_X g)(Y,Z) = (\nabla_Y g)(X,Z)$ for all vector fields $X,Y,Z$;
    \item In affine coordinates $(x^i)$, $g_{ij}$ satisfies
    \[
    \frac{\partial g_{ij}}{\partial x^k} = \frac{\partial g_{kj}}{\partial x^i};
    \]
    \item $g(\gamma_X Y, Z) = g(Y, \gamma_X Z)$ for all vector fields $X,Y,Z$;
    \item $\gamma_{ijk} = \gamma_{jik}$.
\end{enumerate}
\end{prop}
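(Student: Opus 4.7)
The plan is to break the proposition into a short chain of coordinate-based and tensorial arguments. First, (1) $\Leftrightarrow$ (3) is purely local and handled by the Poincaré lemma; (2) $\Leftrightarrow$ (3) is essentially tautological in affine coordinates; and (2) $\Leftrightarrow$ (4) $\Leftrightarrow$ (5) is deduced from the identity relating $\nabla g$ to the difference tensor $s = D - \nabla$.

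For (1) $\Rightarrow$ (3), I would simply differentiate $g_{ij} = \partial_i \partial_j \varphi$ in affine coordinates, which makes $\partial_k g_{ij}$ symmetric in all three indices. For the converse, I read (3) as saying that, for each fixed $j$, the 1-form $\omega_j = g_{ij}\, dx^i$ is closed; the Poincaré lemma gives $\omega_j = d\psi_j$ locally, so $g_{ij} = \partial_i \psi_j$. The symmetry of $g$ then forces $\partial_i \psi_j = \partial_j \psi_i$, so the 1-form $\psi_j\, dx^j$ is itself closed and locally equal to $d\varphi$, whence $g_{ij} = \partial_i \partial_j \varphi$. The equivalence (2) $\Leftrightarrow$ (3) is then immediate because in affine coordinates the Christoffel symbols of $\nabla$ vanish and $(\nabla_{\partial_k} g)(\partial_i, \partial_j)$ reduces to $\partial_k g_{ij}$.

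The main computational ingredient is the identity
\[
(\nabla_X g)(Y, Z) = g(s_X Y, Z) + g(Y, s_X Z),
\]
obtained by subtracting the metric-compatibility relation $(D_X g)(Y,Z) = 0$ from the definition of $(\nabla_X g)(Y,Z)$ and using $s = D - \nabla$. Substituting this into (2) and invoking $s_X Y = s_Y X$ cancels the $g(s_X Y, Z)$ terms on both sides and reduces (2) to $g(Y, s_X Z) = g(X, s_Y Z)$. In indices, setting $s_{ijk} = g_{il} s^l_{jk}$, condition (4) reads $s_{ijk} = s_{kji}$, condition (5) reads $s_{ijk} = s_{jik}$, and torsion-freeness gives $s_{ijk} = s_{ikj}$; since any two of these transpositions generate the full symmetric group $S_3$, conditions (2), (4), and (5) are each equivalent to the total symmetry of $s_{ijk}$, and hence to one another.

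No single step is genuinely hard; the whole argument is a translation between three equivalent viewpoints on the Hessian condition: a local scalar potential, symmetry of $\nabla g$, and self-adjointness of $s$. The only place requiring real care is the displayed identity, where the metric compatibility of $D$ and the non-compatibility of $\nabla$ must be kept carefully distinct and applied at the right moment.
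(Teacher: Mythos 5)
Your argument is correct and complete, and it is essentially the standard proof of this proposition (the one in Shima's book, which the paper cites without reproducing any argument): the Poincar\'e lemma applied twice for $(1)\Leftrightarrow(3)$, the vanishing of the Christoffel symbols of $\nabla$ in affine coordinates for $(2)\Leftrightarrow(3)$, and the identity $(\nabla_X g)(Y,Z)=g(s_XY,Z)+g(Y,s_XZ)$ combined with $s_XY=s_YX$ for $(2)\Leftrightarrow(4)\Leftrightarrow(5)$. I see no gaps; the observation that the ever-present symmetry $s_{ijk}=s_{ikj}$ together with either remaining transposition generates full symmetry of $s_{ijk}$ is exactly the right way to close the chain.
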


\begin{rem}
For a Hessian manifold \((M, g, \nabla)\), one can define a new affine connection by  
\[
\nabla^{*} = 2D - \nabla,
\]
where \(D\) is the Levi-Civita connection of $g$.  
The connection \(\nabla^{*}\) is locally flat and satisfies the compatibility condition  
\[
X \, g(Y, Z) = g(\nabla^{*}_{X} Y, Z) + g(Y, \nabla_{X} Z), \qquad X, Y, Z \in \mathfrak{X}(M).
\]
Hence, the triple \((M, g, \nabla^{*})\) is also a Hessian manifold, and the quadruple \((M, g, \nabla, \nabla^{*})\) is referred to as a \emph{dually flat manifold} \cite{amari2016information, ay2002dually, shima2007geometry, calin2014geometric}.  

It was shown in \cite{ay2002dually} that any compact dually flat manifold necessarily has infinite fundamental group. Consequently, the sphere \(\mathbb{S}^{d}\), for $d > 1$, cannot admit a dually flat structure.
\end{rem}

\subsubsection{The first Koszul form}
On a orientable Hessian manifold $(M,g,\nabla)$, using the connection $\nabla$ and the volume element induced by $g$, 
Koszul introduced a closed $1$-form, which Shima~\cite{shima2007geometry} calls the \emph{first Koszul form}, associated with the Hessian structure $(\nabla, g)$.

\begin{defn}[\cite{koszul1961domaines, shima1997geometry}]
The first Koszul form $\alpha$ associated with $(\nabla, g)$ is the $1$-form defined by
\[
\nabla_X (\mathrm{Vol}_g) = \beta(X) \, \mathrm{Vol}_g, \quad X \in \mathfrak{X}(M),
\]
where $\mathrm{Vol}_g$ is the Riemannian volume form induced by $g$.
\end{defn}

Directly from the definition, 
\[
\beta(X) = \operatorname{Tr}(\gamma_X),
\]
and in affine coordinates,
\[
\beta_i = \frac{1}{2} \frac{\partial \log \det(g_{pq})}{\partial x_i} = \gamma^k_{ki}.
\]

\subsection{Hyperbolic affine manifolds}
Jacques Vey extended W.~Kaup’s notion of hyperbolicity, originally developed for Riemann surfaces, 
to the setting of locally flat manifolds. In the sense of Vey~\cite{vey1969notion} and Kobayashi
\cite{kobayashi1977intrinsic,kobayashi1982invariant,kobayashi1984projectively}, 
hyperbolicity may be regarded as the opposite of geodesic completeness. 
This notion has been extensively studied by many authors, including Koszul
\cite{koszul1968deformations,koszul1961domaines,koszul1965varietes}, 
Vey~\cite{vey1969notion}, Vinberg~\cite{vinberg1967theory}, 
Shima~\cite{shima2007geometry}, and Kobayashi
\cite{kobayashi1977intrinsic,kobayashi1984projectively}. 
Hyperbolicity is a fundamental concept in Koszul geometry and admits several equivalent formulations.

\begin{defn}
A locally flat manifold $(M, \nabla)$ is called hyperbolic (affine hyperbolic) if its universal covering 
$(\widetilde{M}, \widetilde{\nabla})$ 
is diffeomorphic to a convex domain in the Euclidean space which does not contain any straight line.
(Koszul~\cite{koszul1968deformations,koszul1965varietes}).
\end{defn}

Koszul~\cite{koszul1968deformations} showed that, for a compact locally flat manifold, 
hyperbolicity is equivalent to the existence of a closed $1$-form $\beta$ 
whose covariant derivative $\nabla \beta$ is positive definite, 
thus defining a Riemannian metric on $M$. 
Since $\beta$ is everywhere nonzero, Tischler’s theorem~\cite{tischler1970fibering} implies that $M$
is a mapping torus. 
In particular, the Euler characteristic of $M$ vanishes and its first Betti number is positive
\cite{koszul1965varietes}.

\begin{rem}
Hyperbolic affine manifolds $(M,\nabla)$ are closely related to Hessian manifolds.
If $\beta$ is a closed $1$-form, its covariant derivative $\nabla\beta$ is a symmetric $2$-tensor.
By Koszul’s characterization of hyperbolicity, $\nabla\beta$ is positive definite and hence defines
a Riemannian metric on $M$. 
Since closed forms are locally exact, one can locally write $\beta=d\phi$ for some smooth function $\phi$,
and in this case $\nabla\beta$ coincides with the Hessian $d^2\phi$.
\end{rem}

Motivated by Koszul’s notion of hyperbolicity, Shima introduced in Hessian geometry the concept of
a Hessian structure of Koszul type.

\begin{defn}[\cite{shima2007geometry}]
A Hessian structure $(g,\nabla)$ is said to be \emph{of Koszul type} if there exists a closed $1$-form
$\eta$ such that
\[
g=\nabla\eta .
\]
\end{defn}

By Koszul’s results~\cite{koszul1968deformations}, if a compact manifold $M$ admits a Hessian structure
$(g,\nabla)$ of Koszul type, then the affine manifold $(M,\nabla)$ is hyperbolic. 
Equivalently, $M$ is diffeomorphic to a quotient
\[
M \cong \Xi/\Lambda,
\]
where $\Xi\subset\mathbb{R}^n$ is an open convex domain containing no affine line and
$\Lambda$ is a discrete subgroup of the affine group $\Aff(n)$
(see Koszul~\cite{koszul1968deformations} and Vey~\cite{vey1969notion}).

\subsection{Kähler mapping torus}
We now introduce one of the key notions of this work, which will play a fundamental role in the main results.
\begin{defn}
Let $\rho \in \mathrm{Diff}(L)$ be a self-diffeomorphism of a closed, connected manifold $L$. Then
\[
L_{\rho} = \frac{L \times [0,1]}{(p,0) \sim (\rho(p),1)}
\]
is the \emph{mapping torus} of $\rho$. It is the total space of a smooth fiber bundle
\[
L \hookrightarrow L_{\rho} \xrightarrow{\;\pi\;} \mathbb{S}^1.
\]
\end{defn}

If $(L, \Omega)$ is a symplectic manifold and $\rho$ is a symplectomorphism satisfying
\[
\rho^{*}\Omega = \Omega,
\]
then $L_{\rho}$ is called a \emph{symplectic mapping torus}. Moreover, if $(L, \Omega)$ is a Kähler manifold with associated Hermitian structure $(J, h)$, and if $\rho$ is a Hermitian isometry satisfying
\[
\rho^{*} \circ J = J \circ \rho^{*}, \quad \text{and} \quad \rho^{*}h = h,
\]
(which implies $\rho^{*}\Omega = \Omega$), then $L_{\rho}$ is called a \emph{Kähler mapping torus}.

\section{Hessian Manifolds, Kähler Mapping Torus and Betti Numbers
}
\subsection{Hessian manifolds and K\"ahler mapping torus}\label{sub:Kaler mapp}
Hessian manifolds are known to be closely related to K\"ahler manifolds. 
Indeed, the tangent bundle of a Hessian manifold carries a natural K\"ahler metric induced by the Hessian metric 
(see \cite{shima2007geometry,dombrowski1962geometry}).

This subsection is devoted to the study of the relationship, in dimension three, between compact orientable Hessian manifolds and K\"ahler manifolds.

\begin{thm}
\label{thm:class}
Any compact, orientable, $3$-dimensional Hessian manifold $(M,g,\nabla)$ is either the Hantzsche-Wendt manifold or a K\"ahler mapping torus.
\end{thm}

\begin{proof}
Let $(M,g,\nabla)$ be a compact, orientable Hessian manifold of dimension $3$, and let $D$ denote the Levi--Civita connection of $g$.  
By a theorem of Shima and Yagi~\cite[Theorem~4.1]{shima1996geometry}, the first Koszul form $\beta$ satisfies
\[
D\beta = 0,
\]
so its norm $\|\beta\|$ is constant on $M$. Consequently, exactly one of the following two cases occurs: either $\beta$ vanishes identically, or it is everywhere nonzero.

\medskip
\noindent
\textbf{Case 1: $\beta=0$.}
In this case, by~\cite[Theorem~4.2]{shima1996geometry}, the Levi--Civita and affine connections coincide, $D=\nabla$, so $(M,g)$ is flat.  
By the classical classification of compact flat $3$--manifolds due to Bieberbach, Hantzsche, Wendt, and Wolf
\cite{hantzsche1935dreidimensionale,Wolf,bieberbach1911bewegungsgruppen,bieberbach1912bewegungsgruppen},
the first homology group $H_1(M,\mathbb Z)$ is isomorphic to one of
\[
\mathbb Z^3,\qquad
\mathbb Z\oplus\mathbb Z_2^2,\qquad
\mathbb Z\oplus\mathbb Z_3,\qquad
\mathbb Z\oplus\mathbb Z_2,\qquad
\mathbb Z,\qquad
\mathbb{Z}_4 \oplus \mathbb{Z}_4 .
\]
Among the six compact flat $3$--manifolds, the only one with $b_1(M)=0$ is the Hantzsche--Wendt manifold, characterized by
\[
H_1(M,\mathbb Z)\cong\mathbb{Z}_4 \oplus \mathbb{Z}_4 
\]
(see e.g.~\cite{conway2006hearing}).  
All other cases satisfy $b_1(M)>0$. By Marrero and Padr\'on--Fern\'andez~\cite[Theorem~3.2]{marrero1998new}, $M$ is then diffeomorphic to one of
\[
M_1(1),\quad M_2(1),\quad M_1'(1),\quad M_2'(1),\quad \text{or}\quad \mathbb T^3,
\]
in the notation of~\cite{marrero1998new}. Moreover, each of these manifolds admits a co--K\"ahler structure. Since they are compact, it follows from Li~\cite[Theorem~2]{li2008topology} that each of them is a K\"ahler mapping torus.

\medskip
\noindent
\textbf{Case 2: $\beta \neq 0$.}
In this case, $\beta$ is a nowhere vanishing closed $1$--form. By Tischler’s theorem \cite{tischler1970fibering}, we deduce that $M$ is diffeomorphic to a mapping torus. By \cite[Proposition~3]{li2008topology}, any $3$--dimensional mapping torus is symplectic mapping torus; that is, it can be expressed as
\[
S_{\rho} = S \times_{\rho} \mathbb{S}^{1},
\]
where $(S,\omega)$ is a symplectic $2$--manifold and $\rho: S \to S$ is a symplectomorphism satisfying $\rho^{*}\omega = \omega$. Moreover, by \cite[Theorem~1]{li2008topology}, every compact symplectic mapping torus is a cosymplectic manifold. Consequently, there exists a closed $2$--form $\omega$ such that $\omega \wedge \beta > 0$.

Now, in our specific case, we can construct a co--K\"ahler structure using the Hessian metric $g$ as follows. Let $Z$ be the vector field defined by
\[
\beta = i_Z g.
\]
Since $D\beta = 0$, it follows that $DZ = 0$, and therefore $g(Z,Z) = k > 0$ is constant. Define the normalized vector field and $1$--form
\[
\widetilde{Z} = \frac{Z}{\sqrt{k}},
\qquad
\widetilde{\beta} = \frac{\beta}{\sqrt{k}}.
\]
Then
\[
\widetilde{\beta}(\widetilde{Z}) = 1,
\qquad
D\widetilde{Z} = 0,
\qquad
L_{\widetilde{Z}} g = 0,
\qquad
g(\widetilde{Z},\widetilde{Z}) = 1.
\]

Let $\Omega$ be the $2$--form defined by
\[
\Omega = i_{\widetilde{Z}} \mathrm{Vol}_g,
\]
where $\mathrm{Vol}_g$ denotes the Riemannian volume form of $g$. Since $\widetilde{Z}$ is parallel, we have
\[
d\Omega
= d(i_{\widetilde{Z}} \mathrm{Vol}_g)
= L_{\widetilde{Z}} \mathrm{Vol}_g
= 0.
\]
Moreover,
\[
i_{\widetilde{Z}}(\widetilde{\beta} \wedge \mathrm{Vol}_g) = 0,
\]
which implies
\[
\mathrm{Vol}_g = \widetilde{\beta} \wedge \Omega.
\]
Hence $(\widetilde{\beta}, \Omega)$ defines a cosymplectic structure on $M$, with Reeb vector field $\widetilde{Z}$. Since $\widetilde{Z}$ is Killing, $(M,\widetilde{\beta}, \Omega)$ is a $K$--cosymplectic manifold by \cite{bazzoni2015k}. By \cite[Proposition~2.8]{bazzoni2015k}, $M$ admits an almost contact metric structure $(\widetilde{\beta}, \phi, \widetilde{Z}, h)$ satisfying
\[
L_{\widetilde{Z}} h = 0.
\]
By Goldberg’s integrability theorem \cite[Proposition~3]{goldberg1969integrability}, this structure is normal; hence $(M, \widetilde{\beta}, \widetilde{Z}, \phi, h)$ is co--K\"ahler. Finally, Li’s theorem \cite[Theorem~2]{li2008topology} implies that $M$ is a K\"ahler mapping torus.
\end{proof}

\begingroup
\renewcommand\thefootnote{}
\footnotetext{A cosymplectic structure on an oriented $3$--dimensional manifold is a pair $(\lambda,\omega)$ such that $d\omega=0$, $d\lambda=0$, and $\lambda \wedge \omega > 0$~\cite{li2008topology,libermann1959automorphismes}.}

\footnotetext{A $K$--cosymplectic structure on an oriented $3$--dimensional manifold is a pair $(\lambda,\omega)$ such that the Reeb vector field is Killing with respect to some Riemannian metric on $M$~\cite{bazzoni2015k}.}

\footnotetext{A co--K\"ahler structure on an oriented $3$--dimensional manifold consists of a quadruple $(\lambda,\xi,\phi,g)$ whose fundamental $2$-form $\omega$ associated with the structure is defined by
$$
\omega(X,Y) = g(X,\phi Y).
$$
Such that $d\lambda=0$, $d\omega=0$, and the tensor $[\phi,\phi]$ vanishes, where $[\phi,\phi]$ is the Nijenhuis tensor of $\phi$~\cite{blair2010riemannian,li2008topology}.}
\endgroup

\begin{com}
Note that in the case where $\beta$ does not vanish, the dual vector field $Z$ is always the Reeb vector field of a co-K\"ahler structure. In terms of vector bundle isomorphisms, one has
\[
Z = \chi_{\beta,\Omega}^{-1}(\beta),
\]
where
\[
\chi_{\beta,\Omega} : TM \longrightarrow T^*M, \qquad 
v_q \longmapsto \iota_{v_q}\Omega + \beta(v_q)\,\beta.
\]

Moreover, it is well known from~\cite{de1993cosymplectic, cantrijn1992gradient} that there exist canonical coordinates $(z,p,q)$ adapted to the Reeb flow, in which the vector field $Z$ and the first Koszul form $\beta$ take the simple forms
\[
Z = \frac{\partial}{\partial z},
\qquad
\beta = dz.
\]
\end{com}

\begin{rem}
    Note that, as pointed out by Shima~\cite{shima1997geometry} (p.~282, Example), for non-compact Hessian manifolds the condition $D \beta = 0$ is not always satisfied.  
\end{rem}

We can now state the following result.

\begin{cor}
\label{thm:k1}
Any compact, orientable, $3$-dimensional Hessian manifold is finitely covered by a product $S \times \mathbb{S}^1$, where $S$ is a compact K\"ahler surface.
\end{cor}

\begin{proof}
Let $(M, g, \nabla)$ be a compact, orientable, $3$-dimensional Hessian manifold. By Theorem~\ref{thm:class}, $M$ is either a Hantzsche--Wendt manifold (a flat Riemannian manifold) or a K\"ahler mapping torus. In the first case, Bieberbach's theorem~\cite{bieberbach1911bewegungsgruppen} implies that $M$ is finitely covered by a torus $\mathbb{T}^3$, which can be written as $\mathbb{T}^2 \times \mathbb{S}^1$. In the second case, by~\cite[Theorem~3.4]{bazzoni2014structure}, $M$ is finitely covered by a product $S \times \mathbb{S}^1$, where $S$ is a compact K\"ahler surface.
\end{proof}

\begin{cor}\label{th:Hant}
Let $(M,g,\nabla)$ be a compact, orientable, $3$-dimensional Hessian manifold.  
If the first Betti number vanishes, i.e., $b_1(M)=0$, then $M$ is diffeomorphic to the Hantzsche-Wendt manifold, and
\[
H_1(M, \mathbb{Z}) \cong \mathbb{Z}_4 \oplus \mathbb{Z}_4 .
\]
\end{cor}

\begin{proof}
By Theorem~\ref{thm:class}, $M$ is either a flat Hessian manifold ($\beta=0$) or a K\"ahler mapping torus ($\beta \neq 0$). Assume that $b_1(M)=0$. If $\beta$ were nowhere vanishing, then $\beta=df$ for some smooth function $f: M \to \mathbb{R}$. Since $M$ is compact, $f$ attains a maximum at some point $p \in M$, hence $df(p)=0$, contradicting the assumption that $\beta$ is nowhere vanishing. Therefore, $\beta$ must vanish identically, which implies $\nabla = D$ by \cite{shima2007geometry}. Hence, $(M,D)$ is a flat Riemannian manifold. Recall that a compact, orientable, flat $3$-manifold belongs to one of the six classes $\mathcal{S}_1,\dots,\mathcal{S}_6$ in Wolf's classification~\cite{Wolf}(page 122, Chap. 3). Moreover, the first integral homology group $H_1(M,\mathbb{Z})$ is isomorphic to one of
\[
\mathbb{Z}^3,\quad 
\mathbb{Z}\oplus\mathbb{Z}_2^2,\quad 
\mathbb{Z}\oplus\mathbb{Z}_3,\quad 
\mathbb{Z}\oplus\mathbb{Z}_2,\quad 
\mathbb{Z},\quad 
\mathbb{Z}_4\oplus\mathbb{Z}_4 .
\]
The first five cases all have positive first Betti number. Since $b_1(M)=0$, $M$ must belong to the sixth class $\mathcal{S}_6$, which is the Hantzsche--Wendt manifold. Therefore,
\[
H_1(M,\mathbb{Z}) \cong \mathbb{Z}_4 \oplus \mathbb{Z}_4 .
\]
\end{proof}

\begin{cor}
The product of two compact, orientable, $3$-dimensional Hessian manifolds of Koszul type (hyperbolic) admits a K\"ahler structure. 
\end{cor}
\begin{proof}
By \cite{koszul1968deformations}, the first Betti number of a compact affine hyperbolic manifold $(M,\nabla)$ is positive, $b_1(M)>0$. By Theorem~\ref{thm:class} and Corollary~\ref{th:Hant}, $M$ is a co--K\"ahler manifold with co--K\"ahler structure $(\widetilde{\beta_{1}},\phi_{1},\widetilde{Z_{1}},h_{1})$. Consider another compact affine hyperbolic manifold $(N,\widetilde{\nabla})$ with co--K\"ahler structure $(\widetilde{\beta_{2}},\phi_{2},\widetilde{Z_{2}},h_{2})$. Following \cite{watson1983new, morimoto1963normal}, for each $a\in\mathbb{R}$ and $b\neq 0$, the Cartesian product $M \times N$ carries a K\"ahler structure $(J_{a,b}, G_{a,b})$ defined by
\begin{align}
J_{a,b}(X_1, X_2) &= 
\Bigl(
\phi_1 X_1 - \Bigl(\frac{a}{b} \, \widetilde{\beta_{1}}(X_1) + \frac{a^2 + b^2}{b} \, \widetilde{\beta_{2}}(X_2)\Bigr)\, \widetilde{Z_{1}}, \nonumber\\
& \phi_2 X_2 + \Bigl(\frac{1}{b} \, \widetilde{\beta_{1}}(X_1) + \frac{a}{b} \, \widetilde{\beta_{2}}(X_2)\Bigr)\, \widetilde{Z_{2}}
\Bigr),\nonumber\\ 	 
G_{a,b}((X_1, X_2), (Y_1, Y_2)) &= 
h_1(X_1, Y_1) + a \, \widetilde{\beta_{1}}(X_1)\widetilde{\beta_{2}}(Y_2) + a \, \widetilde{\beta_{1}}(Y_1)\widetilde{\beta_{2}}(X_2) \nonumber\\
& + (a^2 + b^2 + 1) \, \widetilde{\beta_{2}}(X_2)\widetilde{\beta_{2}}(Y_2) + h_2(X_2, Y_2).\nonumber
\end{align}  
\end{proof}

\begin{rem}
We can perturb the first Koszul form $\beta$ by adding a small closed $1$-form to obtain a new closed $1$-form $\beta_0$ such that:
\begin{itemize}
    \item $[\beta_0] \in H^1(M;\mathbb{R})$ is a real multiple of an integral cohomology class in $H^1(M;\mathbb{Z})$;
    \item $\beta_0 \wedge \Omega > 0$ everywhere on $M$.
\end{itemize}
Since $[\beta_0]$ is proportional to an integral class, there exists a fibration
\[
\pi \colon M \to \mathbb{S}^1
\]
such that $\beta_0$ is a constant multiple of $\pi^*(\mathrm{vol}_{\mathbb{S}^1})$, where $\mathrm{vol}_{\mathbb{S}^1} = d\theta$ is the standard volume form on the circle. The condition $\beta_0 \wedge \Omega > 0$ ensures that $\Omega$ restricts to a symplectic form on each fiber of $\pi$.
\end{rem}

\subsection{Betti numbers of $3$--dimensional Hessian manifolds}\label{section:Betti}

First, recall that, as already pointed out in the proof of Theorem~\ref{thm:class} by a direct computation, for an $n$--dimensional orientable Hessian manifold whose first Koszul form does not vanish, we have $DZ = 0$, where $Z$ is the vector field dual to the first Koszul form $\beta$. This fact has important consequences for the Betti numbers of any compact orientable Hessian manifold of arbitrary dimension $n$.  

From Chern~\cite{chern1966geometry} (Example~3), one can deduce that the first two Betti numbers of a compact orientable Hessian manifold whose first Koszul form does not vanish necessarily satisfy
\[
b_1(M) \ge 1, \qquad b_2(M) \ge b_1(M) - 1,
\]
a condition now known as Chern’s criterion. By the work of Karp~\cite{karp1977parallel}, additional necessary conditions on the Betti numbers extending Chern’s result were derived, namely
\[
b_1(M) \ge 1, \qquad 
b_{l+1}(M) \ge b_l(M) - b_{l-1}(M), \quad \text{for all } 1 \le l \le n-1.
\]

In this subsection, we study some parity properties of the Betti numbers of $3$--dimensional orientable Hessian manifolds, with particular emphasis on those of Koszul type.

\begin{thm}
\label{thm:bett1}
The first Betti number of any compact, orientable, $3$--dimensional Hessian manifold $(M,g,\nabla)$ is either zero or odd. In the case of a Hessian manifold of Koszul type (hyperbolic), all Betti numbers are odd.
\end{thm}

\begin{proof}
By Theorem~\ref{thm:class}, any compact, orientable, $3$--dimensional Hessian manifold is either the Hantzsche--Wendt manifold or a K\"ahler mapping torus.
In the Hantzsche--Wendt case, one has $b_1(M)=0$. In the K\"ahler mapping torus case, by~\cite[Theorem~2]{li2008topology}, $M$ is co--K\"ahler. Furthermore, by~\cite[Theorem~11]{Chinea1993TopologyOC}, the first Betti number $b_1(M)$ of a compact co--K\"ahler manifold is odd. Hence, in all cases, $b_1(M)$ is either zero or odd.

Now let $(M,g,\nabla)$ be a compact Hessian $3$--manifold of Koszul type. By~\cite{koszul1968deformations}, the affine manifold $(M,\nabla)$ is hyperbolic, and therefore $b_1(M)>0$. By the first part of the theorem, $b_1(M)$ must be odd, so $b_1(M)=2p+1$ for some $p\ge 0$. By Poincar\'e duality, we have $b_2(M)=b_1(M)$ and $b_0(M)=b_3(M)=1$, which proves the result.
\end{proof}

\begin{com}
The first Betti number of the Heisenberg manifold 
\[
\mathrm{Heis}_{\mathbb{Z}} \backslash \mathrm{Heis}_{\mathbb{R}}
\]
is equal to $2$. Therefore, by Theorem~\ref{thm:bett1}, the Heisenberg manifold cannot admit a Hessian structure.

The connected sum of two real projective 3-spaces $\mathbb{RP}^3 \# \mathbb{RP}^3$ has first Betti number $b_1(M)=0$, and the same holds for the lens spaces $L_{p,q}$ and for the sphere $\mathbb{S}^{3}$. However, none of these manifolds is diffeomorphic to the Hantzsche--Wendt manifold. Hence, by Corollary~\ref{th:Hant}, they cannot admit a Hessian structure.
\end{com}

\subsection{Topology of $3$--dimensional Hessian manifolds with non-vanishing first Koszul form}

In this subsection, we focus on the case where the first Koszul form $\beta$ is non-vanishing, and we study the dynamical system generated by the vector field $Z$ defined by $\beta = i_{Z}g$. In the compact case, when the first Koszul form does not vanish, the vector field $Z$ generates a global one-parameter group $\mathrm{Exp}(tZ)$ of diffeomorphisms of $M$. For simplicity, we refer to $\mathrm{Exp}(tZ)$ as the flow of $Z$. In what follows, we study the dynamical system induced by this flow.

\begin{thm}
Let $(M,g,\nabla)$ be a compact, orientable, $3$-dimensional Hessian manifold whose first Koszul form $\beta$ is non-vanishing. If the dual vector field $Z$ of $\beta$ is aperiodic, then $M$ is diffeomorphic to a torus bundle $\mathbb{T}^{2}_{U}$ with $\mathrm{tr}(U)=2$. In the case where the orbits of $Z$ are dense, $M$ is diffeomorphic to the $3$-torus $\mathbb{T}^{3}$.
\end{thm}

\begin{proof}
By Theorem~\ref{thm:class}, $M$ is a K\"ahler mapping torus and therefore admits a co--K\"ahler structure. This naturally induces a cosymplectic structure $(\Omega,\beta)$, whose Reeb vector field is $Z$. Note that a cosymplectic manifold is a special case of a stable Hamiltonian structure with $f=0$~\cite{hutchings2009weinstein}.  

By assumption, $Z$ is aperiodic. Then, by~\cite[Theorem~1.1]{hutchings2009weinstein}, $M$ is a $\mathbb{T}^2$--bundle over $\mathbb{S}^{1}$. Consequently, $Z$ is orbit--equivalent to the suspension of a symplectomorphism of the torus
\[
\phi : \mathbb{T}^2 \longrightarrow \mathbb{T}^2,
\]
whose mapping class corresponds to a matrix $U \in \mathrm{SL}(2,\mathbb{R})$. The Lefschetz number of $\phi$ is
\[
\Lambda_\phi = \sum_{i=0}^{2} (-1)^i \, \mathrm{tr}\bigl(\phi_* H_i(\mathbb{T}^2,\mathbb{Q})\bigr)
= 2 - \mathrm{tr}(U).
\]
By hypothesis, $\phi$ admits no periodic points, which implies $\Lambda_\phi = 0$, and hence $\mathrm{tr}(U)=2$.  

In the case where the orbits of $Z$ are dense, by~\cite[Th\'eor\`eme~1]{carriere1984flots}, we deduce that $M$ is diffeomorphic to the torus $\mathbb{T}^{3}$.
\end{proof}

\begin{rem}
If a $3$--dimensional Hessian manifold $(M,g,\nabla)$ has first Betti number $b_1(M)=1$, then, by Subsection~\ref{sub:Kaler mapp}, the dual vector field $Z$ is the Reeb vector field of a co--K\"ahler structure, and hence of a $K$--cosymplectic structure. According to~\cite[Theorem~7.6]{bazzoni2015k}, the $\mathbb{S}^1$--action generated by $Z$ is automatically Hamiltonian. In particular, $Z$ admits at least two closed Reeb orbits.

Moreover, the number of closed Reeb orbits cannot be finite. Indeed, as noted in the discussion following~\cite[Corollary~8.7]{bazzoni2015k}, if the number of closed Reeb orbits $N$ is finite, then
\[
N = \dim H_B^*(M,\mathcal{F}_Z),
\]
where $H_B^*(M,\mathcal{F}_Z)$ denotes the basic cohomology of $M$ with respect to the foliation $\mathcal{F}_Z$ induced by $Z$. By~\cite[Corollaries~8.7 and~8.9]{bazzoni2015k}, one has
\[
N = 1 + b_1(M) = 2.
\]
By~\cite[Th\'eor\`eme~1]{carriere1984flots}, it follows that $M$ is either diffeomorphic to a lens space $L_{p,q}$ or to $\mathbb{S}^2 \times \mathbb{S}^1$. As shown above, $M$ cannot be diffeomorphic to a lens space. Moreover, by~\cite{shima1981hessian}, the manifold $\mathbb{S}^2 \times \mathbb{S}^1$ has universal cover $\mathbb{S}^2 \times \mathbb{R}$ and therefore cannot admit a Hessian structure. Hence, in this situation, all orbits of $Z$ must be closed.
\end{rem}

\subsection*{Regular case}

We now consider the case where the foliation $\mathcal{F}_{Z}$ generated by $Z$ is regular in the sense of Palais~\cite{palais1957global}. For simplicity, we shall refer to $Z$ itself as regular.

\begin{thm}
Let $(M,g,\nabla)$ be a compact, orientable, $3$--dimensional Hessian manifold whose first Koszul form $\beta$ is non-vanishing. If the dual vector field $Z$ of $\beta$ is regular, then $M$ is diffeomorphic to the product
\[
M \simeq B \times \mathbb{S}^{1},
\]
where $B$ is a compact K\"ahler manifold (necessarily a Riemann surface).
\end{thm}

\begin{proof}
Since $Z$ is regular, each leaf of the foliation $\mathcal{F}_{Z}$ is diffeomorphic to the circle $\mathbb{S}^{1}$, and the orbit space $B = M/\mathcal{F}_{Z}$ is a smooth manifold. Therefore, $M$ is the total space of a principal $\mathbb{S}^{1}$--bundle
\[
\pi : M \longrightarrow B,
\]
and the first Koszul form $\beta$ defines a connection $1$--form on this bundle. Since $\beta$ is closed, its curvature form vanishes, and hence the bundle is flat. Moreover, since $Z$ is the Reeb vector field of a co--K\"ahler structure $(\beta,Z,\phi,g)$ with fundamental $2$--form $\Phi$, it follows from~\cite{blair2010riemannian,li2008topology} that
\[
\mathcal{L}_{Z}\phi = 0, \qquad 
\mathcal{L}_{Z}g = 0, \qquad 
\mathcal{L}_{Z}\Phi = 0.
\]
Therefore, there exists a K\"ahler structure $(\widetilde{\phi},\widetilde{g},\widetilde{\Phi})$ on $B$ such that
\[
\pi^{*}\widetilde{\phi} = \phi, \qquad 
\pi^{*}\widetilde{g} = g, \qquad 
\pi^{*}\widetilde{\Phi} = \Phi.
\]

The Euler class $e_M \in H^{2}_{B}(\mathcal{F}_{Z})$ of the principal $\mathbb{S}^{1}$--bundle $\pi : M \to B$ is represented (up to a constant factor) by the curvature form of $\beta$, and hence vanishes. By~\cite{greub1972connections} or~\cite[Exercise~10.2.7]{martelli2016introduction}, a principal $\mathbb{S}^{1}$--bundle with vanishing Euler class admits a global section and is therefore trivial. Consequently,
\[
M \simeq B \times \mathbb{S}^{1},
\]
where $B$ is a compact K\"ahler manifold of real dimension $2$.
\end{proof}

\section{Complete classification of $3$-dimensional compact orientable Hessian manifolds}

This subsection provides a complete classification of $3$-dimensional orientable Hessian manifolds in terms of the eight Thurston geometries. 

\begin{thm}
\label{thm:sei}
Any compact, orientable, $3$-dimensional Hessian manifold $(M, g, \nabla)$ is either a Hantzsche--Wendt manifold; the $3$-torus $\mathbb{T}^{3}$; one of the torus bundles $\mathbb{T}^{2}_{U}$ with
\[
U \in \left\{ 
\begin{pmatrix}0 & 1 \\ -1 & 0\end{pmatrix},\ 
\begin{pmatrix}-1 & 0 \\ 0 & -1\end{pmatrix},\ 
\begin{pmatrix}0 & -1 \\ 1 & 1\end{pmatrix},\ 
\begin{pmatrix}-1 & -1 \\ 1 & 0\end{pmatrix} 
\right\};
\] 
or a quotient $(\mathbb{H}^{2} \times \mathbb{R}) / \Pi$, where $\Pi \subset \mathrm{Isom}(\mathbb{H}^{2} \times \mathbb{R})$ is a discrete subgroup acting freely. 
\end{thm}

\begin{proof}
Let $(M, g, \nabla)$ be a compact, orientable Hessian $3$-manifold. By Theorem~\ref{thm:class}, $M$ is either the Hantzsche--Wendt manifold when $b_1(M) = 0$, or a Kähler mapping torus when $b_1(M) > 0$. In the latter case, by~\cite{li2008topology}, $M \simeq \Sigma_\varphi$, where $(\Sigma, J, h)$ is a Kähler manifold. Let $\mathrm{Isom}(\Sigma,h)$ denote the group of Hermitian isometries of $\Sigma$, and $\mathrm{Isom}_0(\Sigma,h)$ its identity component. By~\cite[Theorem~6.6]{bazzoni2014structure}, the diffeomorphism $\varphi$ is either isotopic to the identity or of finite order in $\mathrm{Isom}(\Sigma,h)/\mathrm{Isom}_0(\Sigma,h)$; in particular, $\varphi$ is periodic. By~\cite[Theorem~5.4]{scott1983geometries}, $M$ is a Seifert fibered space over a $2$-dimensional orbifold $B$ with Euler number zero. The geometries $\mathbb{S}^3$, $\mathrm{Nil}$, and $\widetilde{\mathrm{SL}_2(\mathbb{R})}$ are excluded.

\medskip
\noindent \textbf{Case 1: $\chi(B) > 0$.}  
In this case, $M$ has $\mathbb{S}^2 \times \mathbb{R}$ geometry. By~\cite[Proposition~10.3.36]{martelli2016introduction}, $M$ is diffeomorphic to one of $\mathbb{S}^2 \times \mathbb{S}^1$, the orientable $\mathbb{S}^1$-bundle over $\mathbb{RP}^2$, or $\bigl(S^2; (p,q), (p,-q)\bigr)$. All manifolds of the last type are diffeomorphic to $\mathbb{S}^2 \times \mathbb{S}^1$. Since $M$ carries a Hessian structure, $\mathbb{S}^2 \times \mathbb{S}^1$ is excluded~\cite{shima1981hessian}. The orientable $\mathbb{S}^1$-bundle over $\mathbb{RP}^2$ is diffeomorphic to $\mathbb{RP}^3 \# \mathbb{RP}^3$, which has $b_1(M) = 0$ and cannot admit a Hessian structure~\cite[Theorem~1]{smillie1981obstruction} (note that one may also use Subsection~\ref{section:Betti}).

\medskip
\noindent \textbf{Case 2: $\chi(B) = 0$.}  
In this case, $M$ has Euclidean geometry. By~\cite[Theorem~12.3.1]{martelli2016introduction}, $M$ admits a flat metric. From the classification of closed flat $3$-manifolds~\cite{hantzsche1935dreidimensionale,Wolf,bieberbach1911bewegungsgruppen,bieberbach1912bewegungsgruppen}, the first homology group $H_1(M,\mathbb{Z})$ is one of $\mathbb{Z}^3$, $\mathbb{Z} \oplus \mathbb{Z}_2^2$, $\mathbb{Z} \oplus \mathbb{Z}_3$, $\mathbb{Z} \oplus \mathbb{Z}_2$, $\mathbb{Z}$, or $\mathbb{Z}_4 \oplus \mathbb{Z}_4$. The Hantzsche--Wendt manifold corresponds to $H_1(M,\mathbb{Z}) \cong \mathbb{Z}_4 \oplus \mathbb{Z}_4$ with $b_1(M)=0$. All other manifolds have $b_1(M) > 0$ and are diffeomorphic to $M_1(1)$, $M_2(1)$, $M_1'(1)$, $M_2'(1)$, or $\mathbb{T}^3$~\cite[Theorem~3.2]{marrero1998new}. These correspond to the torus bundles:
\[
M_1(1) \simeq \mathbb{T}^2_U,\ U = \begin{pmatrix}0 & 1 \\ -1 & 0\end{pmatrix},\quad
M_2(1) \simeq \mathbb{T}^2_U,\ U = \begin{pmatrix}-1 & 0 \\ 0 & -1\end{pmatrix},
\]
\[
M_1'(1) \simeq \mathbb{T}^2_U,\ U = \begin{pmatrix}0 & -1 \\ 1 & 1\end{pmatrix},\quad
M_2'(1) \simeq \mathbb{T}^2_U,\ U = \begin{pmatrix}-1 & -1 \\ 1 & 0\end{pmatrix}.
\]

\medskip
\noindent \textbf{Case 3: $\chi(B) < 0$.}  
In this case, $M$ has $\mathbb{H}^2 \times \mathbb{R}$ geometry, and hence $M \simeq (\mathbb{H}^2 \times \mathbb{R}) / \Pi$, where $\Pi \subset \mathrm{Isom}(\mathbb{H}^2 \times \mathbb{R})$ is a discrete subgroup acting freely~\cite{scott1983geometries}.
\end{proof}

\subsection*{Radiant affine manifolds and Hessian geometry}

We conclude this article by relating our classification results to the theory of radiant affine manifolds.

\begin{defn}\cite{choi2001decomposition,goldman2022geometric,shima1997geometry}
Let $(M,\nabla)$ be an affine manifold. A vector field $H$ on $M$ is called an
\emph{Euler vector field} of $(M,\nabla)$ if
\[
\nabla_X H = X
\]
for every vector field $X$ on $M$. An affine manifold is said to be
\emph{radiant} if it admits an Euler vector field.
\end{defn}

\begin{com}
The classification of radiant affine $3$-manifolds was established in~\cite{choi2001decomposition}. 
Building on the work of Barbot~\cite{barbot2000varietes}, Choi proved that every radiant affine $3$-manifold is a radiant suspension. 
In particular, any such manifold is either a Seifert $3$-manifold covered by a product $S \times \mathbb{S}^1$, where $S$ is a closed surface, a Heisenberg nilmanifold, or a hyperbolic torus bundle.

From Theorem~\ref{thm:sei}, if $M$ admits a Hessian structure, then $M$ must be a Seifert manifold. 
However, a hyperbolic torus bundle cannot be a Seifert manifold~\cite[Proposition~11.4.14]{martelli2016introduction}. 
Moreover, if $M$ is a Heisenberg nilmanifold, then its first Betti number is equal to $2$, which contradicts Proposition~\ref{thm:bett1}. 
Therefore, combining with Theorem~\ref{thm:sei}, we conclude that the only radiant affine $3$-manifolds admitting Hessian metrics are those for which $S$ is either the $2$-torus $\mathbb{T}^2$ or a closed hyperbolic surface $\Sigma_g$ with $g \geq 2$.

\end{com}

\medskip
\noindent
In conclusion, Hessian geometry imposes strong topological and geometric rigidity on radiant affine $3$-manifolds. 
The only examples arise from Seifert manifolds with trivial Euler number, whose
underlying orbifold has either vanishing or negative Euler characteristic, and the resulting geometries reduce to the product geometries
$\mathbb{E}^3$ and $\mathbb{H}^2 \times \mathbb{R}$. 
This provides a complete classification of radiant affine $3$-manifolds admitting Hessian metrics.

\bibliographystyle{splncs04}
\bibliography{main.bib}

@book{shima1997geometry,
  title     = {The Geometry of Hessian Structures},
  author    = {Shima, Hirohiko},
  year      = {1997},
  publisher = {World Scientific},
  address   = {Singapore},
  isbn      = {9789810227351}
}

@article{conway2006hearing,
  title={Hearing the platycosms},
  author={Conway, John Horton and Rossetti, Juan Pablo},
  journal={Mathematical Research Letters},
  volume={13},
  number={3},
  pages={475--494},
  year={2006},
  publisher={International Press of Boston, Inc. Somerville, MA 02143, USA}
}

@article{blair2010riemannian,
  title   = {Riemannian geometry of contact and symplectic manifolds},
  author  = {Blair, David E.},
  journal = {Progress in Mathematics},
  volume  = {203},
  year    = {2010},
  publisher = {Birkhäuser},
  address = {Boston}
}

@article{watson1983new,
  title={New examples of strictly almost K{\"a}hler manifolds},
  author={Watson, Bill},
  journal={Proceedings of the American Mathematical Society},
  volume={88},
  number={3},
  pages={541--544},
  year={1983}
}

@book{Wolf,
author = {Joseph A.Wolf},
title = {Space of constant curvature},
year = {1974},
publisher= {Boston Mass}
}

@article{marrero1998new,
  title={New examples of compact cosymplectic solvmanifolds},
  author={Marrero, JC and Padron, E},
  journal={Archivum Mathematicum},
  volume={34},
  number={3},
  pages={337--345},
  year={1998},
  publisher={Department of Mathematics, Faculty of Science of Masaryk University, Brno}
}

@article{li2008topology,
  title={Topology of co-symplectic/co-K{\"a}hler manifolds},
  author={Li, Hongjun},
  year={2008}
}

@article{tischler1970fibering,
  title={On fibering certain foliated manifolds overS1},
  author={Tischler, David},
  journal={Topology},
  volume={9},
  number={2},
  pages={153--154},
  year={1970},
  publisher={Elsevier}
}

@article{hantzsche1935dreidimensionale,
  title={Dreidimensionale euklidische raumformen},
  author={Hantzsche, Walter and Wendt, Hilmar},
  journal={Mathematische Annalen},
  volume={110},
  number={1},
  pages={593--611},
  year={1935},
  publisher={Springer}
}

@article{bazzoni2015k,
  title={K-cosymplectic manifolds},
  author={Bazzoni, Giovanni and Goertsches, Oliver},
  journal={Annals of Global Analysis and Geometry},
  volume={47},
  number={3},
  pages={239--270},
  year={2015},
  publisher={Springer}
}

@article{goldberg1969integrability,
  title={Integrability of almost cosymplectic structures},
  author={Goldberg, Samuel and Yano, Kentaro},
  journal={Pacific Journal of Mathematics},
  volume={31},
  number={2},
  pages={373--382},
  year={1969},
  publisher={Mathematical Sciences Publishers}
}

@book{shima2007geometry,
  title={The geometry of Hessian structures},
  author={Shima, Hirohiko},
  year={2007},
  publisher={World Scientific}
}

@article{koszul1961domaines,
  title={Domaines born{\'e}s homogenes et orbites de groupes de transformations affines},
  author={Koszul, Jean-Louis},
  journal={Bulletin de la Soci{\'e}t{\'e} Math{\'e}matique de France},
  volume={89},
  pages={515--533},
  year={1961}
}

@article{dombrowski1962geometry,
  title={On the Geometry of the Tangent Bundle.},
  author={Dombrowski, Peter},
  year={1962},
  publisher={Walter de Gruyter, Berlin/New York Berlin, New York}
}

@article{satoh2019almost,
  title={Almost Hermitian structures on tangent bundles},
  author={Satoh, Hiroyasu},
  journal={arXiv preprint arXiv:1908.10824},
  year={2019}
}

@article{libermann1959automorphismes,
  title={Sur les automorphismes infinitesimaux des structures syplectiques et des structures de contact},
  author={LIBERMANN, Melle Paulette},
  journal={Coll. G{\'e}om. Diff. Globale},
  pages={37--58},
  year={1959},
  publisher={Gauthier-Villars}
}

@article{de1993cosymplectic,
  title={Cosymplectic reduction for singular momentum maps},
  author={de Le{\'o}n, Manuel and Saralegi, M},
  journal={Journal of Physics A: Mathematical and General},
  volume={26},
  number={19},
  pages={5033--5043},
  year={1993}
}

@article{carriere1984flots,
  title={Flots riemanniens},
  author={Carri{\`e}re, Yves},
  journal={Ast{\'e}risque},
  volume={116},
  number={31-52},
  pages={4},
  year={1984}
}

@article{hutchings2009weinstein,
  title={The Weinstein conjecture for stable Hamiltonian structures},
  author={Hutchings, Michael and Taubes, Clifford Henry},
  journal={Geometry \& Topology},
  volume={13},
  number={2},
  pages={901--941},
  year={2009},
  publisher={Mathematical Sciences Publishers}
}

@article{cantrijn1992gradient,
  title={Gradient vector fields on cosymplectic manifolds},
  author={Cantrijn, Frans and de Le{\'o}n, Manuel and Lacomba, EA},
  journal={Journal of Physics A: Mathematical and General},
  volume={25},
  number={1},
  pages={175--188},
  year={1992}
}

@book{calin2014geometric,
  title={Geometric modeling in probability and statistics},
  author={Calin, Ovidiu and Udri{\c{s}}te, Constantin},
  volume={121},
  year={2014},
  publisher={Springer}
}

@book{ay2017information,
  title={Information geometry},
  author={Ay, Nihat and Jost, J{\"u}rgen and V{\^a}n L{\^e}, H{\^o}ng and Schwachh{\"o}fer, Lorenz},
  volume={64},
  year={2017},
  publisher={Springer}
}

@article{bazzoni2014structure,
  title={On the structure of co-K{\"a}hler manifolds},
  author={Bazzoni, Giovanni and Oprea, John},
  journal={Geometriae Dedicata},
  volume={170},
  number={1},
  pages={71--85},
  year={2014},
  publisher={Springer}
}

@article{Chinea1993TopologyOC,
  title={Topology of cosymplectic manifolds},
  author={Domingo Chinea and Manuel de Le{\'o}n and Juan Carlos Marrero},
  journal={Journal de Math{\'e}matiques Pures et Appliqu{\'e}es},
  year={1993},
  volume={72},
  pages={567-591},
  url={https://api.semanticscholar.org/CorpusID:123755140}
}

@article{bieberbach1911bewegungsgruppen,
  title={{\"U}ber die Bewegungsgruppen der euklidischen R{\"a}ume: erste Abhandlung},
  author={Bieberbach, Ludwig},
  journal={Mathematische Annalen},
  volume={70},
  number={3},
  pages={297--336},
  year={1911},
  publisher={Springer}
}

@article{ay2002dually,
  title={Dually flat manifolds and global information geometry},
  author={Ay, Nihat and Tuschmann, Wilderich},
  journal={Open Systems \& Information Dynamics},
  volume={9},
  number={2},
  pages={195--200},
  year={2002},
  publisher={Springer}
}

@incollection{shima1981hessian,
  title={Hessian manifolds and convexity},
  author={Shima, Hirohiko},
  booktitle={Manifolds and Lie Groups: Papers in Honor of Yoz{\^o} Matsushima},
  pages={385--392},
  year={1981},
  publisher={Springer}
}

@book{amari2012differential,
  title={Differential-geometrical methods in statistics},
  author={Amari, Shun-ichi},
  volume={28},
  year={2012},
  publisher={Springer Science \& Business Media}
}

@book{amari2016information,
  title={Information geometry and its applications},
  author={Amari, Shun-ichi},
  volume={194},
  year={2016},
  publisher={Springer}
}

@inproceedings{shima1986vanishing,
  title={Vanishing theorems for compact Hessian manifolds},
  author={Shima, Hirohiko},
  booktitle={Annales de l'institut Fourier},
  volume={36},
  number={3},
  pages={183--205},
  year={1986}
}

@article{amari2014curvature,
  title={Curvature of Hessian manifolds},
  author={Amari, Shun-ichi and Armstrong, John},
  journal={Differential Geometry and its Applications},
  volume={33},
  pages={1--12},
  year={2014},
  publisher={Elsevier}
}

@inproceedings{koszul1968deformations,
  title={D{\'e}formations de connexions localement plates},
  author={Koszul, Jean-Louis},
  booktitle={Annales de l'institut Fourier},
  volume={18},
  number={1},
  pages={103--114},
  year={1968}
}

@phdthesis{vey1969notion,
  title={Sur une notion d'hyperbolicit{\'e} des vari{\'e}t{\'e}s localement plates},
  author={Vey, Jacques},
  year={1969},
  school={Universit{\'e} Joseph-Fourier-Grenoble I}
}

@book{goldman2022geometric,
  title={Geometric structures on manifolds},
  author={Goldman, William M},
  volume={227},
  year={2022},
  publisher={American Mathematical Society}
}

@book{greub1972connections,
  title={Connections, Curvature, and Cohomology V1: De Rham cohomology of manifolds and vector bundles},
  author={Greub, Werner and Halperin, Stephen and Vanstone, Ray},
  year={1972},
  publisher={Academic press}
}

@article{koszul1965varietes,
  title={Vari{\'e}t{\'e}s localement plates et convexit{\'e}},
  author={Koszul, Jean-Louis},
  year={1965}
}

@article{palais1957global,
  title={A global formulation of the lie theory of transformtion groups},
  author={Palais, Richard S},
  journal={Mem. Amer. Math. Soc.},
  volume={22},
  year={1957}
}

@article{kobayashi1977intrinsic,
  title={Intrinsic distances associated with flat affine or projective structures},
  author={Kobayashi, Shoshichi},
  journal={J. Fac. Sci. Univ. Tokyo Sect. IA Math},
  volume={24},
  number={1},
  pages={129--135},
  year={1977}
}

@inproceedings{kobayashi1982invariant,
  title={Invariant distances for projective structures},
  author={Kobayashi, Shoshichi},
  booktitle={Naz. Alta Mat. Symp. Math},
  volume={26},
  pages={153--161},
  year={1982}
}

@article{kobayashi1984projectively,
  title={Projectively invariant distances for affine and projective structures},
  author={Kobayashi, Shoshichi},
  journal={Banach Center Publications},
  volume={12},
  pages={127--152},
  year={1984},
  publisher={Instytut Matematyczny Polskiej Akademii Nauk}
}

@article{vinberg1967theory,
  title={Theory of homogeneous convex cones},
  author={Vinberg, Ernest B},
  journal={Trans. Moscow Math. Soc.},
  volume={12},
  pages={303--368},
  year={1967}
}

@book{choi2001decomposition,
  title={The decomposition and classification of radiant affine 3-manifolds},
  author={Choi, Suhyoung},
  volume={730},
  year={2001},
  publisher={American Mathematical Soc.}
}

@article{smillie1981obstruction,
  title={An obstruction to the existence of affine structures},
  author={Smillie, John},
  journal={Inventiones mathematicae},
  volume={64},
  number={3},
  pages={411--415},
  year={1981},
  publisher={Springer}
}

@article{scott1983geometries,
  title={The geometries of 3-manifolds},
  author={Scott, Peter},
  year={1983},
  publisher={Oxford University Press}
}

@article{martelli2016introduction,
  title={An introduction to geometric topology},
  author={Martelli, Bruno},
  journal={arXiv preprint arXiv:1610.02592},
  year={2016}
}

@article{barbot2000varietes,
  title={Vari{\'e}t{\'e}s affines radiales de dimension 3},
  author={Barbot, Thierry},
  journal={Bulletin de la Soci{\'e}t{\'e} Math{\'e}matique de France},
  volume={128},
  number={3},
  pages={347--389},
  year={2000}
}

@article{shima1996geometry,
  title={Geometry of Hessian manifolds},
  author={Shima, Hirohiko and Yagi, Katsumi},
  journal={Differential geometry and its applications},
  volume={7},
  number={3},
  pages={277--290},
  year={1997},
  publisher={Elsevier}
}

@article{bieberbach1912bewegungsgruppen,
  title={{\"U}ber die bewegungsgruppen der euklidischen r{\"a}ume (zweite abhandlung.) die gruppen mit einem endlichen fundamentalbereich},
  author={Bieberbach, Ludwig},
  journal={Mathematische Annalen},
  volume={72},
  number={3},
  pages={400--412},
  year={1912},
  publisher={Springer}
}

@article{milnor19753,
  title={On the 3-dimensional Brieskorn manifolds M (p, q, r)},
  author={Milnor, John},
  journal={Knots, groups, and},
  volume={3},
  pages={175--225},
  year={1975}
}

@article{carriere1993inexistence,
  title={Inexistence de structures affines sur les fibr{\'e}s de Seifert},
  author={Carri{\`e}re, Yves and Dal'Bo, Fran{\c{c}}oise and Meigniez, Ga{\"e}l},
  journal={Mathematische Annalen},
  volume={296},
  number={1},
  pages={743--753},
  year={1993},
  publisher={Springer}
}

@phdthesis{dal1991champs,
  title={Champs parall{\`e}les, radiance, inexistence de structures affines sur certains fibr{\'e}s de Seifert},
  author={Dal'Bo, Fran{\c{c}}oise},
  year={1991}
}

@article{morimoto1963normal,
  title={On normal almost contact structures},
  author={Morimoto, Akihiko},
  journal={Journal of the Mathematical Society of Japan},
  volume={15},
  number={4},
  pages={420--436},
  year={1963},
  publisher={The Mathematical Society of Japan}
}

@article{chern1966geometry,
  title={The geometry of y-structures},
  author={Chern, Shiing-Shen},
  journal={Bulletin of the American Mathematical Society},
  volume={72},
  number={2},
  pages={167--219},
  year={1966}
}

@article{karp1977parallel,
  title={Parallel vector fields and the topology of manifolds},
  author={Karp, Leon},
  year={1977}
}
\end{document}